\setlist[itemize]{noitemsep}
\setlist[enumerate]{noitemsep}
\newcommand*{\barfix}[2][.175ex]{%
  \mathpalette{\@barfix{#1}}{#2}%
}
\newcommand*{\@barfix}[3]{%
  % #1: space
  % #2: math style
  % #3: symbol
  \vbox{%
    \kern#1\relax
    \hbox{$#2#3\m@th$}%
  }%
}
\newtheorem{theorem}{Theorem}
\newtheorem{thm}{Theorem}[section]
\newtheorem{lemma}[thm]{Lemma}
\newcommand{\footremember}[2]{%
    \footnote{#2}
    \newcounter{#1}
    \setcounter{#1}{\value{footnote}}%
}
\newcommand{\footrecall}[1]{%
    \footnotemark[\value{#1}]%
} 
\title{\vspace{-2em}Components, large and small, are as they should be II:\\
supercritical percolation on regular graphs of constant degree}
\author{%
Sahar Diskin \footremember{alley}{School of Mathematical Sciences, Tel Aviv University, Tel Aviv 6997801, Israel. \\Emails: sahardiskin@mail.tau.ac.il, krivelev@tauex.tau.ac.il.}%
\and Michael Krivelevich \footrecall{alley}%
}
\date{}
\begin{document}
\maketitle
\vspace{-2em}
\begin{abstract}
Let $d\ge 3$ be a fixed integer. Let $y\coloneqq y(p)$ be the probability that the root of an infinite $d$-regular tree belongs to an infinite cluster after $p$-bond-percolation. We show that for every constants $b,\alpha>0$ and $1<\lambda< d-1$, there exist constants $c,C>0$ such that the following holds. Let $G$ be a $d$-regular graph on $n$ vertices, satisfying that for every $U\subseteq V(G)$ with $|U|\le \frac{n}{2}$, $e(U,U^c)\ge b|U|$ and for every $U\subseteq V(G)$ with $|U|\le \log^Cn$, $e(U)\le (1+c)|U|$. Let $p=\frac{\lambda}{d-1}$. Then, with probability tending to one as $n$ tends to infinity, the largest component $L_1$ in the random subgraph $G_p$ of $G$ satisfies $\left|1-\frac{|L_1|}{yn}\right|\le \alpha$, and all the other components in $G_p$ are of order $O\left(\frac{\lambda\log n}{(\lambda-1)^2}\right)$. This generalises (and improves upon) results for random $d$-regular graphs.
\end{abstract}

\section{Introduction}
Given a \textit{host graph} $G=(V,E)$ and a probability $p\in [0,1]$, the random subgraph $G_p$ is obtained by performing independent $p$-bond percolation on $G$, that is, we form $G_p$ by retaining each edge in $E$ independently with probability $p$. Perhaps the most well-studied example is the binomial random graph $G(n,p)$, which is equivalent to percolation with probability $p$ on the complete graph $K_n$. For more background on random graphs and on percolation, we refer the interested reader to \cite{B01,BR06,FK16,G99,JLR00,K82}.

A classical result of Erd\H{o}s and R\'enyi from 1960 \cite{ER60} states that $G(n,p)$ undergoes a fundamental phase transition, with respect to its connected component structure, when the expected average degree is around one (that is, $p\cdot (n-1)\approx 1$): in the \textit{subcritical regime}, when the expected average degree is less than one, all the components are typically of order $O(\log n)$, whereas in the \textit{supercritical regime}, when the expected average degree is larger than one, there likely emerges a unique giant component, taking linear (in $n$) fraction of the vertices, and all other components are typically of order $O(\log n)$. In fact, it is known that in the supercritical regime in $G(n,p)$, the asymptotic relative size of the giant component is dictated by the survival probability of a Galton-Watson process with offspring distribution $Bin(n-1, p)$. 

A quantitatively similar behaviour has been observed in percolation on several (families of) $d$-regular graphs, when $d=\omega_n(1)$: pseudo-random $(n,d,\lambda)$-graphs with $\lambda=o(d)$ \cite{FKM04}, the $d$-dimensional hypercube $Q^d$ \cite{AKS81, BKL92}, and other $d$-regular `high-dimensional' graphs \cite{CDE24, DEKK22, L22}. In a companion paper \cite{DK25B}, the authors consider percolation on $d$-regular graphs $G$ with $d=\omega(1)$, and provide sufficient and essentially tight conditions guaranteeing a phase transition in $G_p$ quantitatively similar to that of $G(n,p)$ when the expected average degree is around one. Roughly, it suffices to require $G$ to satisfy a (very) mild `global' edge expansion, and to have a fairly good control on the expansion of small sets in $G$. For the case of growing degree, this serves as a unified proof for the aforementioned `concrete' host graphs.

In this paper, we consider $d$-regular graphs $G$ on $n$ vertices, where $d\ge 3$ is a fixed integer and our asymptotics are in $n$.\footnote{When $d=2$, the graph $G$ is just a collection of cycles.} Our goal is to obtain comparable (to the growing degree case) sufficient conditions on $G$, such that $G_p$ exhibits a phase transition with respect to its component structure `similar' to that of $G(n,p)$ --- that is, the typical emergence (when the expected average degree is above one) of a unique giant component taking linear number of the vertices, whereas all other components are typically much smaller, of order logarithmic in the number of vertices.

Note that for a fixed $d\ge 3$, given a $d$-regular graph $G$ and $p=\frac{\lambda}{d-1}$ with $\lambda<1$, it is known that \textbf{whp} all components of $G_p$ are of order $O(\log n)$ (see \cite{NP10}). We thus focus our attention on the supercritical regime, that is, when $p=\frac{\lambda}{d-1}$ with $\lambda>1$.

Before stating our main result, let us give some intuition and discuss previous related results. For fixed integer $d\ge 3$ and $p\in (0,1)$, let $q\coloneqq q(p)$ be the unique solution in $(0,1)$ of the equation
\begin{align}\label{eq: extinction}
    q=\left(1-p+pq\right)^{d-1}.
\end{align}
This is the extinction probability of a Galton-Watson tree with offspring distribution $Bin(d-1, p)$ (see, for example, \cite{AN72, D19}). Consider then the probability $y\coloneqq y(p)$ that the root of an infinite $d$-regular tree belongs to an infinite cluster after $p$-bond-percolation. Then, $y$ is given by
\begin{align}\label{eq: survival}
    y=\sum_{i=1}^{d}\binom{d}{i}p^i(1-p)^{d-i}(1-q^i)=1-(1-p+pq)^{d}=1-q(1-p)-pq^2,
\end{align}
where $q=q(p)$ is defined according to \eqref{eq: extinction}. As we will shortly see, and as might be anticipated, $y$ will serve us both in the subsequent discussion and in the statement of our main result. We note that when $d$ tends to infinity and $p=\frac{1+\epsilon}{d-1}$ for small constant $\epsilon>0$, then $y$ is asymptotically equal to the unique solution in $(0,1)$ of $1-y=\exp\{-(1+\epsilon)y\}$, and \textbf{whp} the largest component in supercritical $G(n,p)$ is of order $(1+o(1))yn$ (see, for example, Remark 1.2 of \cite{KLS20}).

A fairly well-studied family of graphs are (percolation on) random $d$-regular graphs. For fixed $d\ge 3$, Goerdt \cite{G01} (see also the results of Alon, Benjamini, and Stacey \cite{ABS04}) showed that for a typical random $d$-regular graph $G$ on $n$ vertices, $G_p$ exhibits a phase transition with respect to the size of its largest component around $p=\frac{1}{d-1}$ (see also \cite{JP18} for behaviour at criticality). In a subsequent work, Pittel \cite{P08} further refined this result: when $p=\frac{1-\epsilon}{d-1}$, for some small constant $\epsilon>0$, \textbf{whp}\footnote{With high probability, that is, with probability tending to one as $n$ tends to infinity.} all components of $G_p$ are of order at most $O\left(\frac{\log n}{\epsilon^2}\right)$, whereas when $p=\frac{1+\epsilon}{d-1}$, \textbf{whp} there is a unique giant component of order $(1+o_n(1))yn$ in $G_p$ (where $y=y(p)$ is defined according to \eqref{eq: survival}), and typically all other components of $G_p$ are of order $O(\log^{1+o(1)}n)$. In the barely supercritical regime, that is when $\epsilon= \epsilon(n)\to 0$ and $\epsilon\gg n^{-1/3}$, more precise results are known --- see \cite{NP10, R12}. There have also been several results considering percolation on graphs chosen uniformly at random from all graphs with a given degree sequence, see \cite{BBCR10, BR15, F07, FJP22, J09, LMP24, R12}.

Another family of graphs which has been studied are high-girth expanders. For fixed $d\ge 3$, Alon, Benjmaini, and Stacey \cite{ABS04} argued that when $G$ is a $d$-regular high-girth expander on $n$ vertices, then $G_p$ exhibits a phase transition with respect to the size of its largest component around $p=\frac{1}{d-1}$: setting $p=\frac{\lambda}{d-1}$, when $\lambda<1$ \textbf{whp} all components of $G_p$ are of order at most $O(\log n)$, whereas when $\lambda>1$ \textbf{whp} there is a \textit{unique} giant component of order $\Theta(n)$. In a subsequent work, Krivelevich, Lubetzky, and Sudakov \cite{KLS20} showed that when $\lambda>1$, for every $\delta, b>0$ there exists $R$ such that if $G$ is a $d$-regular graph with girth at least $R$ and satisfies that for every $U\subseteq V(G)$ with $|U|\le \frac{n}{2}$, $e_G(U,U^c)\ge b|U|$, then \textbf{whp} $\left|1-|L_1|/yn\right|\le \delta$, where $y=y(p)$ is defined according to \eqref{eq: survival} (see also \cite{ABS23} for further generalisations). In terms of the typical sizes of the other components in the supercritical regime, Alon, Benjamini, and Stacey \cite{ABS04} showed that in the supercritical regime, the second largest component is of order $O(n^a)$, for some $a=a(b)\in [0,1)$. Krivelevich, Lubetzky, and Sudakov \cite{KLS20} showed that this is in fact tight: for any $a<1$, there are constant degree high-girth expanders $G$, where the second-largest component of supercritical $G_p$ is in fact of order at least $n^a$ for any $a<1$.

Our main result gives sufficient conditions on a $d$-regular graph $G$, where $d\ge 3$ is fixed, such that when $p=\frac{\lambda}{d-1}$, for some constant $1<\lambda<d-1$, there typically emerges a unique giant component, and all other components are of order at most logarithmic in $|V(G)|$.
\begin{theorem}\label{th: main}
Fix an integer $d\ge 3$. Let $1<\lambda<d-1$ be a constant, and let $p=\frac{\lambda}{d-1}$. Let $b,\alpha>0$ be constants. Then, there exist constants $c\coloneqq c(\lambda, \alpha)>0$ and $C\coloneqq C(d, \lambda, b)>0$ such that the following holds. Let $G$ be a $d$-regular graph on $n$ vertices, satisfying:
\begin{enumerate}[(P\arabic*{})]
    \item for every $U\subseteq V(G)$ with $|U|\le n/2$, $e_G(U,U^c)\ge b|U|$; and, \label{p: global}
    \item for every $U\subseteq V(G)$ with $|U|\le \log^Cn$, $e_G(U)\le (1+c)|U|$. \label{p: local}
\end{enumerate}
Then, \textbf{whp} $G_p$ contains a unique giant component, $L_1$, satisfying $\left|1-\frac{|L_1|}{yn}\right|\le \alpha$, where $y$ is defined according to \eqref{eq: survival}. Furthermore, \textbf{whp} every other component in $G_p$ is of order at most $5\lambda\log n /(\lambda-1)^2$.
\end{theorem}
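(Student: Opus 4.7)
My plan is to use a two-round sprinkling argument together with a BFS/Galton--Watson coupling. Fix $\lambda_1$ with $1<\lambda_1<\lambda$ chosen so that $|y(p_1)-y(p)|<\alpha/3$ (possible by continuity of $y(p)$ in $p$), and set $p_1=\lambda_1/(d-1)$ and $p_2$ so that $(1-p_1)(1-p_2)=1-p$. Then $G_p$ is distributed as $G_{p_1}\cup G_{p_2}$ where the two subgraphs are independent. The two tasks are: (a) to show that in $G_{p_1}$, a set $W$ of $(y(p_1)\pm o(1))n$ vertices lies in components of size at least $\log^C n$, while every other component has size at most $5\lambda\log n/(\lambda-1)^2$; (b) to use the sprinkling round $G_{p_2}$ together with (P\ref{p: global}) to merge these into a single giant of size within $\alpha n$ of $yn$.

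For (a), I run BFS from a fixed vertex $v$ in $G_{p_1}$. As long as the explored set $U$ has size at most $\log^C n$, (P\ref{p: local}) bounds $e_G(U)$ by $(1+c)|U|$, so fewer than $2(1+c)|U|$ half-edges of $U$ go inside $U$. Hence each newly reached vertex offers $(d-1)-O(c)$ fresh neighbors on average, and the BFS is stochastically sandwiched between branching processes with offspring $\mathrm{Bin}(d-1-O(c),p_1)$ and $\mathrm{Bin}(d-1,p_1)$. Choosing $c$ sufficiently small (in terms of $\lambda,\alpha$), both branching processes have survival probability within $\alpha/6$ of $y(p_1)$. Standard total-progeny tail bounds then yield: (i) extinction before reaching $5\lambda\log n/(\lambda-1)^2$ vertices has probability $1-y(p_1)\pm o(1)$; (ii) reaching size $\log^C n$ has probability $y(p_1)\pm o(1)$; and (iii) the probability of total progeny landing strictly between $5\lambda\log n/(\lambda-1)^2$ and $\log^C n$ is $o(1/n)$. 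A union bound over $v\in V(G)$ gives the claim for small components, and a second-moment estimate concentrates $|W|$ (exploiting that BFSs from $u,v$ can be performed on essentially disjoint edge sets, hence remain approximately independent, thanks again to (P\ref{p: local})).

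For (b), I first rule out components of intermediate size. By (P\ref{p: global}), any component $C\subseteq V(G)$ with $|C|\le n/2$ has at least $b|C|$ boundary edges in $G$; counting connected subsets of size $k$ in a $d$-regular graph by at most $n(ed)^k$ and demanding that all $bk$ boundary edges be absent gives a failure probability at most $n(ed)^k(1-p_1)^{bk}$, which is $o(1)$ uniformly for $k\ge\log^C n$ once $C$ is large enough relative to $d,\lambda,b$. Hence in $G_{p_1}$ every component is either of size at most $5\lambda\log n/(\lambda-1)^2$ or of size at least $\epsilon n$ for some $\epsilon=\epsilon(b,\lambda)>0$, so the set $W$ is a union of at most $1/\epsilon$ linear-size components. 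By (P\ref{p: global}), any two such components have $\Omega(n)$ edges between them in $G$; the sprinkled $G_{p_2}$ misses all of them with probability $e^{-\Omega(n)}$, and a union bound over at most $\binom{1/\epsilon}{2}$ pairs merges them into a single giant $L_1$ containing $W$. The extra mass $|L_1\setminus W|$ comes from small $G_{p_1}$-components absorbed via sprinkled edges; its expectation can be bounded in terms of $p_2$ and the typical small-component size $O(\log n)$, and choosing $\lambda_1$ close enough to $\lambda$ ensures $\bigl||L_1|-yn\bigr|\le\alpha n$.

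The main obstacle will be step (a): the quantitative coupling of the BFS in $G_{p_1}$ to the Galton--Watson branching process with offspring $\mathrm{Bin}(d-1,p_1)$, delivering the sharp constant $5\lambda/(\lambda-1)^2$ in the small-component bound while matching the survival probability within $o(1)$. This needs both a sharp Cramer-type tail estimate on the Galton--Watson total progeny (for the subcritical/extinction side) and a carefully tuned choice of $c$ in (P\ref{p: local}) so that the local perturbation of the offspring distribution does not shift the resulting survival probability by more than $\alpha/6$. Once this is in place, the sprinkling step and first-moment medium-component elimination are essentially routine applications of (P\ref{p: global}).
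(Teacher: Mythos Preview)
Your sprinkling-plus-BFS outline is close in spirit to the paper's argument, but part (b) has a genuine gap: the first-moment bound $n(ed)^k(1-p_1)^{bk}$ for ruling out components of size $k\in[\log^Cn,\epsilon n]$ simply does not work when $b$ is small. The per-vertex exponent is $\log(ed)+b\log(1-p_1)$, and since $b>0$ is an \emph{arbitrary} constant in the hypothesis, for $b<\log(ed)/\log\!\bigl(1/(1-p_1)\bigr)$ this exponent is positive and the bound blows up rather than tending to zero. The paper explicitly flags this: in the constant-degree setting the gap statement does \emph{not} follow from a first-moment count. Consequently your conclusion that $G_{p_1}$ has only $O(1/\epsilon)$ linear-size components, with $\Omega(n)$ $G$-edges between any two, is unsupported, and the whole merging step collapses.

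The paper circumvents this by never trying to push the gap beyond $\log^Cn$. Instead it shows (via a BFS seeded at a ball of radius $2\log\log n$) that every vertex is within distance $2\log\log n$ of $W$; then for any $G_1$-component-respecting bipartition of $W$ it extracts $\Omega\bigl(|A|/d^{O(\log\log n)}\bigr)$ edge-disjoint short $G$-paths between the two sides, and sprinkles along those. Because each $W$-component has size at least $\log^Cn$, there are at most $n^{|A|/\log^Cn}$ partitions, and taking $C$ large beats the polylogarithmic loss. A second point you are missing entirely is the analogue of the paper's Lemma~3.4: after sprinkling, one must rule out that small $G_{p_1}$-components coalesce into a \emph{new} component of size $\ge\log^Cn$ disjoint from $W$. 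Your sentence about bounding $\mathbb{E}|L_1\setminus W|$ addresses the size of $L_1$, not the non-existence of a second large component; the paper treats this as the most delicate step and handles it with a separate argument comparing the cost of building such a structure by sprinkling against the (already established) exponential unlikelihood of a $G_p$-component of size $\Theta(\log^2 n)$.
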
 
Some comments are in place. We note that Assumption \ref{p: global} can be guaranteed by proving that the second largest eigenvalue of the adjacency matrix of $G$ is bounded away from $d$ (see \cite{AM85}). Further, while we treat $b$ as a constant, it follows readily from the proof that one can allow $b$ to be $1/\text{polylog}(n)$ by taking $C$ to be a sufficiently large constant. The aforementioned construction of Krivelevich, Lubetzky, and Sudakov~\cite{KLS20} demonstrates that \textit{some} `local' requirement is indeed necessary in order to guarantee that the second largest component is typically of order $O(\log n)$. In fact, it suffices to require Assumption \ref{p: local} only for connected sets $U\subseteq V(G)$. Observe that any graph $G$ in which every two cycles of length at most $C\log \log n$ are at distance at least $C\log \log n$ satisfies Assumption \ref{p: local} as well. Since a random $d$-regular graph typically satisfies this (see, for example, \cite{W99}), and since a random $d$-regular graph \textbf{whp} satisfies Assumption \ref{p: global} (see \cite{B88}), our results apply to typical random $d$-regular graphs, and can be seen as a generalisation (and improvement) of the results of Pittel in the strictly supercritical regime \cite{P08}. It also follows that expanding $d$-regular $n$-vertex graphs $G$ with girth $\Omega(\log\log n)$ have the behaviour in the supercritical regime postulated by Theorem~\ref{th: main}. Moreover, note that when $\lambda=1+\epsilon$ for sufficiently small constant $\epsilon>0$, we obtain that typically the second largest component is of order $O(\log n/\epsilon^2)$, similarly to the case of supercritical $G(n,p)$. Moreover, when $d$ and $\lambda$ are sufficiently large, we have that \textbf{whp} the second largest component is of order $O(\log n/\lambda)$, similarly to the `very' supercritical regime of $G(n,p)$ with $p=\frac{\lambda}{n}$, where the second largest component is typically of order $O\left(\log n/(\lambda-1-\log \lambda)\right)$. While we treat $\lambda>1$ as a constant, with minor modifications our results also apply to parts of the barely supercritical regime, where $\lambda=1+\epsilon$ and $\epsilon=\epsilon(n)\to 0$.

The paper is structured as follows. In Section \ref{s: prelim} we set out some notation, a modification of the Breadth First Search (BFS) algorithm, and a couple of lemmas which we will use throughout the paper. Section \ref{s: proof} is devoted to the proof of our main result. Finally, in Section \ref{s: discuss} we discuss our results, possible generalisations, and avenues for future research.

\section{Preliminaries}\label{s: prelim} 
Consider a graph $H$ and a vertex $v \in V(H)$. Let $C_H(v)$ be the connected component in $H$ containing $v$. For an integer $r$, let $B_H(v, r)$ be the ball of radius $r$ in $H$ centred at $v$, that is, the set of all vertices of $H$ at a distance of at most $r$ from $v$. For $u, v \in V(H)$, denote the distance in $H$ between $u$ and $v$ by $dist_H(u, v)$. For $S \subseteq V(H)$, set $dist_H(u, S) \coloneqq \min_{v \in S} dist_H(u, v)$. As is fairly standard, $E_H(S, S^c)$ represents the set of edges in $H$ with one endpoint in $S$ and the other in $V(H) \setminus S$, and $E_H(S)$ denotes the set of edges in the induced subgraph $H[S]$. We use $e_H(S, S^c) \coloneqq |E_H(S, S^c)|$ and $e_H(S) \coloneqq |E_H(S)|$. We denote by $N_H(S)$ the external neighbourhood of $S$ in $H$. When the graph is clear from the context, the subscript may be omitted. All the logarithms are with the natural base. Throughout the paper, we systematically ignore rounding signs for the sake of clarity of presentation.

\subsection{A modified Breadth First Search process}\label{s: BFS}
We will utilise the following modification of the classical Breadth First Search (BFS) exploration algorithm. The algorithm receives as an input a graph $H=(V,E)$ with an ordering $\sigma$ on its vertices, a subset $U\subseteq V$, and a sequence $(X_i)_{i=1}^{|E|}$ of i.i.d Bernoulli$(p)$ random variables. 

We maintain three sets throughout the process: $S$, the set of vertices whose exploration has been completed; $Q$, the set of vertices currently being explored, kept in a queue (first-in-first-out discipline); and $T$, the set of vertices which have yet been processed. We initialise $S=\varnothing$, $Q=U$, and $T=V\setminus U$. The process stops once $Q$ is empty.

At round $t$ of the algorithm execution, we consider the first vertex $v$ in $Q$, and query the first edge between $v$ and $T$ according to the order $\sigma$ (that is, the edge connecting $v$ to the first -- according to $\sigma$ -- vertex in $T$). If $X_t=1$, we retain this edge and move its endpoint vertex in $T$ to $Q$. If $X_t=0$, we discard the edge and continue. If there are no neighbours of $v$ in $T$ left to query, we move $v$ from $Q$ to $S$ and continue. 

Note that once $Q$ is empty, $H_p[S]$ has the same distribution as $\bigcup_{u\in U}C_{H_p}(u)$. Moreover, at every moment we have queried (and answered in the negative) all edges between current $S$ and $T$.

\subsection{Auxiliary Lemmas}
We will make use of the following two fairly standard probability bounds. The first one is a Chernoff-type tail bound for the binomial distribution (see, for example, Appendix A in \cite{AS16}).
\begin{lemma}\label{l:  chernoff}
Let $n\in \mathbb{N}$, let $p\in [0,1]$, and let $X\sim Bin(n,p)$. Then for any $0<t\le \frac{np}{2}$, 
\begin{align*}
    &\mathbb{P}\left[|X-np|\ge t\right]\le 2\exp\left\{-\frac{t^2}{3np}\right\}.
\end{align*}
%Further, for any $b>0$,
%\begin{align*}
%    \mathbb{P}\left[X\ge bnp\right]\le \left(\frac{e}{b}\right)^{bnp}.
%\end{align*}
\end{lemma}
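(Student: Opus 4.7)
The plan is to apply the standard exponential-moment (Chernoff) method and then translate the resulting rate-function bound into the stated Gaussian-type form. First I would write $X=\sum_{i=1}^{n}X_{i}$ with $X_{i}$ i.i.d.\ Bernoulli$(p)$, so that $\mathbb{E}[e^{\lambda X}]=(1-p+pe^{\lambda})^{n}$, and invoke the elementary inequality $1-p+pe^{\lambda}\le\exp(p(e^{\lambda}-1))$ (which follows from $1+y\le e^{y}$) to obtain $\mathbb{E}[e^{\lambda X}]\le\exp(np(e^{\lambda}-1))$ for every real $\lambda$.

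For the upper tail, I would apply Markov's inequality to $e^{\lambda X}$ with $\lambda>0$ and optimise in $\lambda$ by choosing $\lambda=\log(1+t/(np))$; this is the textbook computation and produces
$$\mathbb{P}[X\ge np+t]\le\exp\!\bigl(-np\cdot\psi(t/(np))\bigr),$$
where $\psi(x)=(1+x)\log(1+x)-x$. The analogous argument with $\lambda<0$ (equivalently, applied to $n-X\sim Bin(n,1-p)$) gives
$$\mathbb{P}[X\le np-t]\le\exp\!\bigl(-np\cdot\varphi(t/(np))\bigr),$$
with $\varphi(x)=(1-x)\log(1-x)+x$.

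It then remains to verify the elementary analytic fact that $\psi(x)\ge x^{2}/3$ and $\varphi(x)\ge x^{2}/3$ for all $x\in[0,1/2]$. Both follow by a Taylor expansion around $0$: one has $\psi(x)=\tfrac{x^{2}}{2}-\tfrac{x^{3}}{6}+\cdots$ and $\varphi(x)=\tfrac{x^{2}}{2}+\tfrac{x^{3}}{6}+\cdots$, and on $[0,1/2]$ the cubic-and-higher corrections are controlled so that the constant $\tfrac{1}{2}$ degrades at worst to $\tfrac{1}{3}$. Substituting $x=t/(np)$, which is at most $1/2$ thanks to the hypothesis $t\le np/2$, converts each rate-function bound into $\exp(-t^{2}/(3np))$, and a union bound over the two tails produces the factor of $2$.

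There is essentially no obstacle: the argument is entirely standard. The only mild point of care is to check that the constant $3$ (rather than $2$) is correct on the full interval $[0,1/2]$, which is precisely what forces the restriction $t\le np/2$ in the hypothesis; for smaller deviations the sharper Gaussian constant $2$ would be available, but the weaker form with $3$ is more convenient for applications.
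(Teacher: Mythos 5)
Your argument is correct, and it is essentially the standard proof of this inequality: the paper does not prove the lemma at all, but simply cites it (Appendix A of [AS16]), where exactly this exponential-moment computation with the rate functions $\psi(x)=(1+x)\log(1+x)-x$ and $\varphi(x)=(1-x)\log(1-x)+x$ appears. The only step you state somewhat loosely is the bound $\psi(x)\ge x^2/3$ on $[0,1/2]$; it does hold (for instance via the standard inequality $\psi(x)\ge \frac{x^2}{2(1+x/3)}$, which gives $\psi(x)\ge 3x^2/7$ for $x\le 1/2$, while $\varphi(x)\ge x^2/2$ outright), so the proof is complete as proposed.
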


The second one is a variant of the well-known Azuma-Hoeffding inequality (see, for example, Chapter 7 in \cite{AS16}).
\begin{lemma}\label{l: azuma}
Let $m\in \mathbb{N}$ and let $p\in [0,1]$. Let $X = (X_1,X_2,\ldots, X_m)$ be a random vector with range $\Lambda = \{0,1\}^m$ with each $X_{\ell}$ distributed according to independent Bernoulli$(p)$. Let $f:\Lambda\to\mathbb{R}$ be such that there exists $C \in \mathbb{R}$ such that for every $x,x' \in \Lambda$ which differ only in one coordinate,
\begin{align*}
    |f(x)-f(x')|\le C.
\end{align*}
Then, for every $t\ge 0$,
\begin{align*}
    \mathbb{P}\left[\big|f(X)-\mathbb{E}\left[f(X)\right]\big|\ge t\right]\le 2\exp\left\{-\frac{t^2}{2C^2m}\right\}.
\end{align*}
\end{lemma}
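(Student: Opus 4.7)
The plan is to prove this as a standard instance of the bounded-differences (McDiarmid-type) concentration inequality, via the Doob martingale associated with $f(X)$. First I would fix the natural filtration $\mathcal{F}_k = \sigma(X_1,\ldots,X_k)$ for $0\le k\le m$ and define $Y_k = \mathbb{E}\bigl[f(X)\,\big|\,\mathcal{F}_k\bigr]$, so that $Y_0 = \mathbb{E}[f(X)]$, $Y_m = f(X)$, and $(Y_k)_{k=0}^m$ is a martingale with respect to $(\mathcal{F}_k)$ by the tower property. Controlling the stated deviation then reduces to controlling $|Y_m - Y_0|$.

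The heart of the argument is to verify that every increment $D_k \coloneqq Y_k - Y_{k-1}$ satisfies $|D_k|\le C$ almost surely. Using the independence of the coordinates, I would set
\[
g_b \coloneqq \mathbb{E}\bigl[f(X)\,\big|\,X_1,\ldots,X_{k-1},X_k=b\bigr] \quad \text{for } b\in\{0,1\},
\]
so that $Y_k = g_{X_k}$ and $Y_{k-1} = p\,g_1 + (1-p)\,g_0$. Since by hypothesis $|f(x) - f(x')|\le C$ whenever $x,x'\in \Lambda$ differ only in the $k$-th coordinate, and since averaging over the tail coordinates $X_{k+1},\ldots,X_m$ (whose conditional joint distribution does not depend on the value plugged into the $k$-th slot, again by independence) preserves this pointwise Lipschitz bound, I obtain $|g_1 - g_0|\le C$, and hence $|D_k|\le \max(p,1-p)\cdot |g_1-g_0|\le C$.

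With bounded increments in hand, the conclusion follows from the standard exponential-moment argument for martingales. Hoeffding's lemma, applied conditionally on $\mathcal{F}_{k-1}$ to the mean-zero increment $D_k$ (whose conditional support has length at most $2C$), yields $\mathbb{E}\bigl[e^{\lambda D_k}\,\big|\,\mathcal{F}_{k-1}\bigr]\le \exp\{\lambda^2 C^2/2\}$ for every $\lambda\in\mathbb{R}$. Iterating via the tower property gives $\mathbb{E}\bigl[e^{\lambda(Y_m - Y_0)}\bigr] \le \exp\{\lambda^2 m C^2 / 2\}$, after which Markov's inequality and the choice $\lambda = t/(mC^2)$ deliver the one-sided tail $\mathbb{P}[Y_m - Y_0 \ge t] \le \exp\{-t^2/(2mC^2)\}$. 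Repeating the argument with $-f$ in place of $f$ and taking a union bound contributes the factor of $2$ in the stated inequality.

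There is no substantive obstacle here --- this is essentially a textbook derivation of the bounded-differences inequality, and is why Lemma \ref{l: azuma} is presented as a background tool rather than a result of the paper. The only step demanding a moment of care is the Lipschitz-preservation under conditional expectation: one must use the independence of the coordinates to argue that the two functions of $(X_1,\ldots,X_{k-1})$ obtained by setting $X_k=0$ or $X_k=1$ and averaging over $(X_{k+1},\ldots,X_m)$ differ pointwise by at most $C$. Everything else --- the martingale property, Hoeffding's lemma, the optimisation over $\lambda$ --- is routine.
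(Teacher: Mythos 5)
Your proof is correct and complete: the Doob martingale decomposition, the verification that $|Y_k-Y_{k-1}|\le C$ via independence of the coordinates, and the conditional Hoeffding-lemma/Markov argument with $\lambda=t/(mC^2)$ give exactly the stated bound. The paper does not prove this lemma but cites it as a standard tool (Chapter 7 of \cite{AS16}), and your argument is precisely that standard derivation, so there is nothing to add.
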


%(Sahar) : The next one is not really necessary, but it might allow us to "discard" some dependencies on d, which is nice.
%\subsection{Deterministic properties of the graph}
We will also use the following lemma, relating our local density assumption to local vertex expansion.
\begin{lemma}\label{l: assumption to vtx expanson}
Let $d\ge 3$ and let $G$ be a $d$-regular graph on $n$ vertices satisfying Assumption \ref{p: local} with $C> 1$ and $0<c<\frac{1}{4}$. Then, for every $v\in V(G)$,
we have $|B(v,2\log\log n)|\ge \frac{10\lambda\log n}{(\lambda-1)^2}$.
\end{lemma}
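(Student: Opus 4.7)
My plan is a ball-growth argument applied inductively. Fix $v \in V(G)$ and set $b_r := |B(v,r)|$; I will show that the balls around $v$ expand geometrically while they stay within the size range of Assumption \ref{p: local}, and iterate this expansion out to radius $r = 2\log\log n$.

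The main step is to prove the growth inequality $b_r \ge \frac{d-1}{1+2c}\,b_{r-1}$ whenever $b_r \le \log^C n$. I would apply Assumption \ref{p: local} to $U := B(v,r)$, giving $e(U) \le (1+c)\,b_r$. To lower-bound $e(U)$, I count $\sum_{u \in U}\deg_U(u)$: every vertex in $B(v,r-1)$ has all $d$ of its neighbors in $U$ (since any neighbor lies at distance at most $r$ from $v$) and so contributes $d$, while every vertex in $S_r := B(v,r)\setminus B(v,r-1)$ has at least one neighbor in $B(v,r-1)$ by definition of distance and so contributes at least $1$. Hence $2\,e(U) \ge d\,b_{r-1} + |S_r| = (d-1)\,b_{r-1} + b_r$, and combining with $e(U) \le (1+c)\,b_r$ yields $(1+2c)\,b_r \ge (d-1)\,b_{r-1}$.

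Writing $\beta := (d-1)/(1+2c) > 1$, I would iterate from $b_0 = 1$: either some $r_0 \le 2\log\log n$ is the first index at which $b_{r_0} > \log^C n$, in which case $b_{2\log\log n} \ge b_{r_0} > \log^C n$ already exceeds $\frac{10\lambda \log n}{(\lambda-1)^2}$ by far (as $C > 1$), or the growth inequality applies throughout and $b_{2\log\log n} \ge \beta^{2\log\log n} = (\log n)^{2\ln \beta}$. To finish, I would verify $(\log n)^{2\ln\beta} \ge \frac{10\lambda\log n}{(\lambda-1)^2}$; taking logs, this amounts to $2\ln\beta \ge 1 + \frac{\ln(10\lambda/(\lambda-1)^2)}{\ln\ln n}$, which for all sufficiently large $n$ reduces to $\beta > \sqrt{e}$.

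The main obstacle is to ensure that $\beta$ is comfortably above $\sqrt{e} \approx 1.65$. For $d \ge 4$ this is automatic under the hypothesis $c < 1/4$, since then $\beta \ge 2$. The delicate case is $d = 3$: the formula $\beta = 2/(1+2c)$ forces $c$ to be taken noticeably below $1/4$ (roughly $c \le 0.1$ already suffices), but this is still permitted by $0 < c < 1/4$. Since $c = c(\lambda, \alpha)$ in the application to Theorem~\ref{th: main} is a parameter at our disposal, this imposes no real restriction.
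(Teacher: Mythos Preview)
Your argument is essentially the paper's: show that Assumption~\ref{p: local} forces the balls $B(v,r)$ to grow by a constant factor exceeding $\sqrt{e}$ at each step while $|B(v,r)|\le\log^C n$, and then iterate to radius $2\log\log n$. The only difference is bookkeeping: the paper applies~\ref{p: local} to both $B(v,r)$ and $B(v,r+1)$ to extract the growth ratio $\tfrac{d-1-c}{1+c}$, whereas you apply it once to $B(v,r)$ together with a degree-sum count to get $\tfrac{d-1}{1+2c}$; the two ratios agree to first order in $c$.

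Your caveat about $d=3$ is well taken: your ratio $2/(1+2c)$ exceeds $\sqrt{e}$ only for $c$ below roughly $0.1$, not for the full range $c<\tfrac14$ in the hypothesis. The paper's ratio $\tfrac{2-c}{1+c}$ has the same defect (the displayed growth factor $1+\tfrac{d-1-c}{1+c}$ in the paper carries an extra ``$1+$'' not supported by the preceding line), so this is a shared boundary issue rather than a flaw specific to your argument. Since in the application $c=c(\lambda,\alpha)$ is taken arbitrarily small, neither version affects Theorem~\ref{th: main}.
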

\begin{proof}
Fix $r\in \mathbb{N}$ and let $U\coloneqq B(v,r)$. Note that $U\cup N(U)=B(v,r+1)$. If $|U\cup N(u)|\le \log^Cn$, then by \ref{p: local}, $e(U\cup N(U))\le (1+c)|U\cup N(U)|$. On the other hand, again by \ref{p: local}, $e(U\cup N(U))\ge d|U|-e(U)\ge (d-1-c)|U|$. Hence, $|U\cup N(U)|\ge \frac{|U|(d-1-c)}{1+c}$. Thus, for every $r\in \mathbb{N}$,
\begin{align*}
    |B(v,r+1)|\ge \min\left\{\log^Cn, \left(1+\frac{d-1-c}{1+c}\right)|B(v,r)|\right\}.
\end{align*}
Therefore, taking $r_0=\log_{1+\frac{d-1-c}{1+c}}\left(\frac{10\lambda\log n}{(\lambda-1)^2}\right)$, we have that $|B(v,r_0)|\ge \frac{10\lambda\log n}{(\lambda-1)^2}$. All that is left is to note that $r_0\le 2\log\log n$.
\end{proof}

Finally, we will utilise the following lemma, showing that if $G$ is a $d$-regular graph satisfying Assumption \ref{p: local}, then there are `many' vertices in $G$ which are far from any cycle. Formally,
\begin{lemma}\label{l: far from cycles}
Let $d\ge 3$ and let $G$ be a $d$-regular graph on $n$ vertices satisfying Assumption \ref{p: local} with a sufficiently small constant $c>0$. Then, there is a set $X\subseteq V(G)$ with $|X|\ge \left(1-\frac{1}{(d-1)^{\frac{1}{16c}}}\right)n$ such that for every $v\in X$, there are no cycles in $B\left(v,\frac{1}{16c}\right)$.  
\end{lemma}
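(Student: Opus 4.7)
Set $r := 1/(16c)$ and let $Y := V(G) \setminus X$ denote the set of vertices $v$ for which $B(v, r)$ contains a cycle; we need to show $|Y| \le n/(d-1)^r$. The plan is as follows. For each $v \in Y$, since $G[B(v, r)]$ is not a tree, a BFS from $v$ to depth $r$ must encounter a first non-tree edge at some level $i_v \le r$; together with the two corresponding BFS paths back to $v$, this edge closes into a cycle $C_v \subseteq B(v, r)$ of length at most $2r + 1$. Letting $W \subseteq V(G)$ denote the set of vertices lying on some cycle of length at most $2r + 1$, every $v \in Y$ lies within distance $r$ of $W$, and so
\begin{align*}
|Y| \le |W| \cdot \tau_r,
\end{align*}
where $\tau_r := 1 + d + d(d-1) + \ldots + d(d-1)^{r-1}$ is the $r$-ball size in the infinite $d$-regular tree.

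It then suffices to bound $|W|$. For each $u \in W$, the ball $B(u, r)$ contains a short cycle, and so the cycle excess $\epsilon_r(u) := e(B(u, r)) - |B(u, r)| + 1 \ge 1$. Assumption~\ref{p: local} applied to each such ball (of constant size, hence $\le \log^C n$ for $n$ large) yields the pointwise bound $\epsilon_r(u) \le c|B(u, r)| + 1$, and hence the global estimate $\sum_v \epsilon_r(v) \le cn\tau_r + n$. Crucially, each short cycle is contained in \emph{many} balls: by the vertex-expansion estimate $|B(v, s)| \ge \bigl(\tfrac{d-1-c}{1+c}\bigr)^s$ from the proof of Lemma~\ref{l: assumption to vtx expanson}, a short cycle $C$ of length $\ell$ lies in $B(v, r)$ for every $v$ in $\bigcap_{u \in C} B(u, r) \supseteq B(u_0, r - \lceil \ell/2 \rceil)$ (with $u_0 \in C$ any fixed vertex), a set of size $\Omega\bigl((d-1)^{r - \lceil \ell/2 \rceil}\bigr)$. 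A stratified double-counting over cycle length $\ell \in [3, 2r+1]$ of cycle-ball incidences then converts the upper bound on $\sum_v \epsilon_r(v)$ into a bound on $|W|$, and substituting $r = 1/(16c)$ — chosen so that $(1 + c)^r \le e^{cr} = e^{1/16}$ stays bounded — produces $|Y| \le n/(d-1)^r$.

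The main obstacle is the quantitative double-counting: a naive application $|W| \le \sum_v \epsilon_r(v) \le cn\tau_r$ gives only $|Y| \le cn\tau_r^2 \asymp cn(d-1)^{2r}$, which is much weaker than the required $n/(d-1)^r$. The improvement must come from exploiting the ``sharing'' multiplicity $\Omega((d-1)^{r - \lceil \ell/2 \rceil})$ of each short cycle across balls. A subtle point is that $\epsilon_r(v)$ counts independent cycles in cycle space rather than all cycles, so one likely must work with a different excess proxy (e.g., a weighted count of short cycles through each ball) or carry out a more refined BFS analysis to obtain a matching lower bound. The precise exponent $1/16$ in $r = 1/(16c)$ is what finally aligns the constants so that the total accounting closes.
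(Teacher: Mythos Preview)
Your proposal is not a proof but a plan with an explicitly acknowledged gap: you yourself note that the naive bound gives only $|Y|\le cn\tau_r^2$, and that closing the gap ``likely'' requires ``a different excess proxy\ldots\ or a more refined BFS analysis''. Neither is supplied. Two concrete obstacles make the excess--double-count route genuinely hard to complete. First, $\epsilon_r(v)=e(B(v,r))-|B(v,r)|+1$ is the dimension of the cycle space of $G[B(v,r)]$, not the number of short cycles it contains; distinct short cycles in the same ball need not contribute additively, so the ``sharing multiplicity'' does not translate into a matching lower bound on $\sum_v\epsilon_r(v)$. Second, cycles of length close to $2r+1$ have sharing multiplicity only $O(1)$ by your own estimate $(d-1)^{r-\lceil\ell/2\rceil}$, so the stratified sum over $\ell$ gives no gain for the longest stratum, and nothing prevents $W$ from being large on that account.

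The paper's argument avoids all of this by extracting a stronger structural consequence of \ref{p: local} first: any two cycles of length at most $\tfrac{1}{4c}$ must be at distance at least $\tfrac{1}{4c}$ apart. Indeed, if not, their union together with a short connecting path is a set $U$ with $|U|\le \tfrac{3}{4c}$ and $e(U)\ge |U|+1$, whereas \ref{p: local} forces $e(U)\le (1+c)|U|<|U|+1$. It follows that the $\tfrac{1}{8c}$--neighbourhoods of the short cycles are pairwise disjoint (and each contains a single cycle, hence has order at least $(d-1)^{1/(8c)}$), so the number $m$ of short cycles satisfies $m\le n/(d-1)^{1/(8c)}$. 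The bad set $Y$ is contained in the union of the $\tfrac{1}{16c}$--neighbourhoods of these $m$ cycles, of total size at most $m(d-1)^{1/(16c)}\le n/(d-1)^{1/(16c)}$. This is a two-line counting argument once the ``short cycles are far apart'' observation is in hand; the missing idea in your approach is precisely this direct use of \ref{p: local} on the union of two nearby cycles, rather than on single balls.
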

\begin{proof}
We first claim that in $G$, every two cycles of length at most $\frac{1}{4c}$ are at distance at least $\frac{1}{4c}$. Otherwise, let $U$ be the vertices of these two cycles, together with the vertices of a path of length at most $\frac{1}{4c}$ connecting them. Then, $|U|\le 2\cdot \frac{1}{4c}+\frac{1}{4c}=\frac{3}{4c}$ and $e(U)\ge |U|+1$ (since $U$ is connected and contains at least two cycles). On the other hand, by \ref{p: global},
\begin{align*}
    e(U)\le (1+c)|U|\le |U|+c\cdot \frac{3}{4c}<|U|+1.
\end{align*}

Let $m$ be the number of cycles of length at most $\frac{1}{4c}$ in $G$, denote them by $C_1,\ldots, C_m$. For every $i\in [m]$, let $N_i$ be the set of vertices in $V(G)$ at distance at most $\frac{1}{8c}$ from $C_i$. We then have that for every $i\neq j\in [m]$, $N_i\cap N_j=\varnothing$ (as otherwise we would have two cycles of length at most $\frac{1}{4c}$ at distance at most $2\cdot\frac{1}{8c}=\frac{1}{4c}$). By the same reasoning, for every $i\in [m]$, $G[N_i]$ has only one cycle. We thus obtain that $m\cdot (d-1)^{\frac{1}{8c}}\le n$, that is, $m\le \frac{n}{(d-1)^{1/(8c)}}$. 

Since $m(d-1)^{1/(16c)}\le \frac{n}{(d-1)^{1/(16c)}}$, we conclude that there are at most $\frac{n}{(d-1)^{1/(16c)}}$ vertices in $G$ which are at distance at most $\frac{1}{16c}$ from a cycle of length at most $\frac{1}{4c}$. Hence there is a set $X\subseteq V(G)$, $|X|\ge \left(1-\frac{1}{(d-1)^{1/(16c)}}\right)n$, such that every $v\in X$ is at distance at least $\frac{1}{16c}$ from any cycle of length at most $\frac{1}{4c}$. Thus, every $v\in X$ satisfies that $B\left(v,\frac{1}{16c}\right)$ contains no cycles.
\end{proof}

\section{Proof of Theorem \ref{th: main}}\label{s: proof}
Throughout the section, we assume $C>0$ is a large enough constant with respect to $d,\lambda-1$ and $b$, and that $c>0$ is a small enough constant with respect to $\lambda-1$ and $\alpha$. 

We utilise a double-exposure/sprinkling argument \'a la Ajtai-Koml\'os-Szemer\'edi~\cite{AKS81}. Let $\delta\coloneqq \delta(\lambda, d)>0$ be a sufficiently small constant, satisfying that $\lambda-\delta>1$. Let $p_2=\frac{\delta}{d-1}$ and let $p_1$ be such that $(1-p_1)(1-p_2)=1-p$, noting that $p_1\ge \frac{\lambda-\delta}{d-1}$ and that $G_p$ has the same distribution as $G_{p_1}\cup G_{p_2}$. We abbreviate $G_1\coloneqq G_{p_1}$ and $G_2\coloneqq G_{p_1}\cup G_{p_2}$. The overall strategy will be similar to that of the growing degree case \cite{DK25B}, however the analysis itself (and where the difficulties lie) is quite different. We begin by considering `large' components in $G_1$. We show that typically there are no components in $G_1$ (nor in $G_2$) whose size is between $5\lambda\log n/(\lambda-1)^2$ and $\log^Cn$ (Lemma \ref{l: gap statement}). Here, unlike the growing degree case, this does not follow from a first moment argument, and we will utilise the BFS algorithm in order to estimate the probability a fixed vertex lies in a component of a given size. Utilising our modified BFS algorithm, we further show that \textbf{whp} in $G_1$, every vertex is within distance $O(\log\log n)$ from a `large' component (Lemma \ref{l: large components are well spread}). Then, in a manner similar to that of \cite{AKS81}, using these two properties, we show that typically after sprinkling with probability $p_2$, all `large' components in $G_1$ merge into a unique component $L_1$ in $G_2$ (Lemma \ref{l: merge}). Finally, we show that when sprinkling with probability $p_2$, we typically do not `accidentally' merge small components (of order $O(\log n)$) in $G_1$ into a large component (of order $\Omega(\log^C n)$) in $G_2$ (Lemma \ref{l: uniqueness}). All that is then left is to argue about the size of $L_1$, which we do by utilising Lemma \ref{l: far from cycles}, in a fashion somewhat similar to \cite{KLS20}.

More formally, let $W$ be the set of vertices in `large' components in $G_1$, that is,
\begin{align*}
    W\coloneqq \left\{v\in V(G)\colon |C_{G_1}(v)|\ge \frac{5\lambda\log n}{(\lambda-1)^2}\right\}.
\end{align*}

We begin by showing that \textbf{whp} there are no components in $G_p$ whose order is between $\frac{5\lambda\log n}{(\lambda-1)^2}$ and $\log^Cn$. 
\begin{lemma}\label{l: gap statement}
Fix $v\in V(G)$, and let $k\in \left[\frac{1}{16c},\log^Cn\right]$. The probability that $|C_{G_p}(v)|=k$ is at most $\exp\left\{-\frac{(\lambda-1)^2k}{4\lambda}\right\}$. In particular, there are no components in $G_p$ whose order is between $\frac{5\lambda\log n}{(\lambda-1)^2}$ and $\log^C n$. 
\end{lemma}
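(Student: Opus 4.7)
The plan is to run the modified BFS from Section~\ref{s: BFS} with $U = \{v\}$ on the host graph $G$, driven by the i.i.d.\ Bernoulli$(p)$ sequence $(X_i)_{i \ge 1}$, and to convert the event $\{|C_{G_p}(v)| = k\}$ into a left-tail large-deviation event for a binomial.

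First, I would record the exploration bookkeeping: writing $C \coloneqq C_{G_p}(v)$, on the event $|C_{G_p}(v)| = k$ the BFS terminates having queried exactly $N = (k-1) + e_G(C, V(G)\setminus C)$ edges, of which the $k-1$ BFS-tree edges are $1$s and the $e_G(C, V(G)\setminus C)$ edges leaving $C$ are $0$s; no internal non-tree edges of $C$ are ever queried. Since $k \le \log^C n$, Assumption~\ref{p: local} applied to $C$ gives $e_G(C) \le (1+c)k$, hence $e_G(C, V(G)\setminus C) = dk - 2 e_G(C) \ge (d - 2 - 2c)k$, and so $N \ge N_0 \coloneqq (d - 1 - 2c)k - 1$. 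In particular, the BFS consumes at least $N_0$ Bernoullis, and at most $k - 1$ of them equal $1$. Hence
\[
\{|C_{G_p}(v)| = k\} \subseteq \left\{ \sum_{i=1}^{N_0} X_i \le k - 1 \right\},
\]
so the desired probability is bounded by $\mathbb{P}[Bin(N_0, p) \le k - 1]$.

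The remaining step is a Chernoff left-tail estimate on this binomial. Its mean is
\[
N_0 p = \lambda k\left(1 - \tfrac{2c}{d-1}\right) - \tfrac{\lambda}{d-1}.
\]
The role of the hypothesis $k \ge 1/(16c)$ is precisely to absorb the additive constant $\lambda/(d-1)$ into an $O(c)k$ error term: with $c$ chosen small enough in terms of $\lambda - 1$, one obtains $N_0 p - (k - 1) \ge (\lambda - 1)(1 - \eta) k$ for any prescribed $\eta > 0$. A standard binomial Chernoff bound of the form $\mathbb{P}[X \le \mu - t] \le \exp(-t^2/(2\mu))$ then yields
\[
\mathbb{P}[Bin(N_0, p) \le k - 1] \le \exp\!\left(- \tfrac{(\lambda - 1)^2 (1 - \eta)^2 k}{2 \lambda}\right) \le \exp\!\left(- \tfrac{(\lambda - 1)^2 k}{4 \lambda}\right),
\]
which is the stated bound (with room to spare if $\eta$ is small).

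The main obstacle is really just the careful bookkeeping of the error terms: one must verify that the $2c$-loss from Assumption~\ref{p: local}, the additive constant in $N_0$, and the replacement of $k$ by $k - 1$ combine into a multiplicative factor close to $1$ on the target deviation $(\lambda - 1) k$, which is where both the smallness of $c$ (in terms of $\lambda - 1$) and the condition $k \ge 1/(16c)$ are used. Granted this Chernoff bound, the ``in particular'' part follows by a direct union bound: for every $k \ge 5 \lambda \log n/(\lambda - 1)^2$ the bound above is at most $n^{-5/4}$, and summing over the $n$ choices of $v$ and the at most $\log^C n$ values of $k$ in the range gives total failure probability $O(\log^C n \cdot n^{-1/4}) = o(1)$.
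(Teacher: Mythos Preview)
Your proof is correct and essentially identical to the paper's: run the BFS from $v$, use Assumption~\ref{p: local} to get at least $(d-1-2c)k-1$ queries with exactly $k-1$ successes, apply a Chernoff lower-tail bound, then union-bound over $v$ and $k$. One minor imprecision: the BFS may also query some $G$-edges internal to $C$ that happen not to lie in $G_p$ (an edge $vu$ with $u$ still in $T$ when $v$ is processed, answered $0$, and $u$ discovered later via another edge), so your ``exactly $N$'' should read ``at least $N$''; this is harmless since the argument only uses $N \ge N_0$.
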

\begin{proof}
We will utilise the BFS algorithm described in Section \ref{s: BFS}, with $U=\{v\}$ (that is, we initialise $Q=\{v\}$), with $H=G$ and with probability $p$. 

Fix $k\in [1/(16c),\log^Cn]$. If $|C_{G_p}(v)|=k$, then there is some moment $t$ where $Q$ is empty and $|S|=k$. Since $k\le \log^Cn$, by Assumption \ref{p: local}, $e_G(S)\le (1+c)k$, and since $G$ is $d$-regular and $Q=\varnothing$, $e_G(S,T)=e_G(S,S^c)\ge (d-2-2c)k$. Hence, we have had at least $(d-2-2c)k$ queries corresponding to $e_G(S,T)$, and at least additional $k-1$ queries corresponding to the internal edges of $C_{G_p}(v)$ which have been explored. Further, only $k-1$ queries were answered in the positive. Therefore, by Lemma \ref{l:  chernoff},
\begin{align*}
    \mathbb{P}\left[|C_{G_p}(v)\right]&\le \mathbb{P}\left[Bin\left(k(d-1-2c)-1,\frac{\lambda}{d-1}\right)\le k-1\right]\\
    &\le \exp\left\{-\frac{(\lambda-1)^2k^2}{4\lambda k}\right\}=\exp\left\{-\frac{(\lambda-1)^2k}{4\lambda}\right\},
\end{align*}
where we assumed that $c$ is sufficiently small with respect to $\lambda-1$. 

As for the second part of the lemma's statement, for $k\in [5\lambda\log n/(\lambda-1)^2,\log^Cn]$ the above implies the probability that a fixed $v$ is in a component of order $k$ is at most $\exp\left\{-\frac{(\lambda-1)^2k}{4\lambda}\right\}=o\left(\frac{1}{n\log^Cn}\right)$. Therefore, the union bound over the $n$ possible choices of $v$ and $\log^Cn$ choices of $k$ completes the proof.
\end{proof}
Note that by our assumptions on $p_1$, the above holds (with the same proof) in $G_1$.

We now turn to show that typically every vertex is within distance $O(\log\log n)$ from a vertex in $W$.
\begin{lemma}\label{l: large components are well spread}
\textbf{Whp}, for every $v\in V(G)$ we have that $dist_{G}(u,W)\le 2\log\log n$.
\end{lemma}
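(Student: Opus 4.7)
The plan is to fix $v \in V(G)$ and to show that, with probability $1 - o(1/n)$, some vertex of the ball $U := B_G(v, 2\log\log n)$ lies in $W$; a union bound over the $n$ choices of $v$ will then finish the proof. By Lemma~\ref{l: assumption to vtx expanson}, $|U| \ge 10\lambda\log n/(\lambda-1)^2$, so setting $s := 5\lambda\log n/(\lambda-1)^2$ we have $|U| \ge 2s$.

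Next, I will run the modified BFS of Section~\ref{s: BFS} on $G_1 = G_{p_1}$ initialised with $Q_0 = U$, and let $X$ denote the total number of vertices visited at termination. Then $X = \bigl|\bigcup_{u \in U} C_{G_1}(u)\bigr|$, and since this union consists of at most $|U|$ distinct $G_1$-components, the event ``no $u \in U$ lies in $W$'' --- equivalently, every such component has fewer than $s$ vertices --- is contained in $\{X < |U|s\}$; conversely, $X \ge |U|s$ pigeonholes some starting component to size at least $s$. It therefore suffices to bound $\mathbb{P}[X < |U|s]$.

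To estimate $\mathbb{P}[X = k]$ for each $k \in [|U|, |U|s]$, I will mimic the proof of Lemma~\ref{l: gap statement}. If the BFS halts with $X = k$, then (since $k \le |U|s \le \log^C n$ and Assumption~\ref{p: local} applies) the total number of queries $T$ satisfies
\[
T \;\ge\; (d-2-2c)k + (k-|U|) \;=\; (d-1-2c)k - |U|,
\]
accounting for the edges from the visited set to its complement (all answered negatively) together with the $k-|U|$ positive discovery queries. A Chernoff-type bound on the lower tail of $\mathrm{Bin}(T, p_1)$ --- whose mean exceeds $k-|U|$ by at least $(\lambda_1-1)(1-o_c(1))k + (1-p_1)|U|$ once $c$ is small relative to $\lambda - 1$ --- will give $\mathbb{P}[X = k] \le \exp(-\Omega((\lambda-1)^2 k / \lambda))$. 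Summing the resulting geometric series from $k = |U|$ and plugging in the lower bound on $|U|$ produces an exponent comfortably exceeding $\log n$, so $\mathbb{P}[X < |U|s] = n^{-\Omega(1)} = o(1/n)$, and the union bound concludes.

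The main technical hurdle I anticipate is tracking the Chernoff constants tightly enough to guarantee the final exponent strictly exceeds $\log n$: the delicate regime is $\lambda \nearrow d-1$, where $1 - p_1$ becomes small and one must squeeze all of $\Omega(\log n)$ out of the first (quadratic-in-$\lambda-1$) term alone. This requires invoking the sharp lower-tail Chernoff form $\exp(-t^2/(2np))$ and choosing $c$ small enough that the multiplicative slack $(1 - o_c(1))$ in the gap is essentially $1$.
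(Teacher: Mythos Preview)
Your approach is correct and essentially the same as the paper's: run the modified BFS from (a set inside) the ball $B_G(v,2\log\log n)$, use Assumption~\ref{p: local} to lower-bound the number of queries, apply a Chernoff lower-tail bound to show the explored set is polylogarithmically large with probability $1-o(1/n)$, then pigeonhole to find a single large component and union-bound over $v$. The only cosmetic differences are that the paper seeds with a subset $Y_v\subseteq B_G(v,2\log\log n)$ of size exactly $10\lambda\log n/(\lambda-1)^2$ rather than the full ball, and aims for the threshold $\log^C n$ (then pigeonholes against $|Y_v|=O(\log n)$) rather than your threshold $|U|\cdot s$; both lead to the same conclusion once $C$ is large relative to $d$. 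Your flagged concern about needing the sharper lower-tail Chernoff form when $\lambda$ is close to $d-1$ is legitimate but minor, and the paper's proof is equally casual about this point.
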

\begin{proof}
Fix $v\in V(G)$. By Lemma \ref{l: assumption to vtx expanson}, we have that $|B(v,2\log\log n)|\ge \frac{10\lambda\log n}{(\lambda-1)^2}$. Let $Y_v$ be an arbitrary set of $\frac{10\lambda\log n}{(\lambda-1)^2}$ vertices in $B(v,2\log\log n)$. We now run the BFS algorithm described in Section \ref{s: BFS} with $U=Y_v$ (that is, we initialise $Q=Y_v$), $H=G$, and probability $p_1$. 

Suppose towards contradiction that at the moment $t$ when $Q$ emptied, $|S|\coloneqq s\le \log^Cn$. Then, all the edges between $S$ and $T=V(G)\setminus S$ have been queried and answered in the negative. By Assumption \ref{p: local}, we have that $e_G(S,T)=e_G(S,S^c)\ge (d-2-2c)s$. Hence, $t\ge (d-2-2c)s+s-\frac{10\lambda\log n}{(\lambda-1)^2}$, and we have received $s-\frac{10\lambda\log n}{(\lambda-1)^2}$ positive answers. By Lemma \ref{l:  chernoff}, the probability of this event is at most
\begin{align*}
    \mathbb{P}\left[Bin\left((d-1-2c)s-\frac{10\lambda\log n}{(\lambda-1)^2},\frac{\lambda-\delta}{d-1}\right)\le s-\frac{10\lambda\log n}{(\lambda-1)^2}\right]\le \exp\left\{-\frac{((\lambda-1) s/2)^2}{4\lambda s}\right\}=o(1/n),
\end{align*}
where we used our assumption that $c,\delta$ are sufficiently small with respect to $\lambda-1$, and the fact that $s\ge \frac{10\lambda\log n}{(\lambda-1)^2}$. 

Thus, by the union bound over the $n$ possible choices of $v$, we have that \textbf{whp} for every $v\in V(G)$, there is a set $Y_v\subseteq B(v,2\log\log n)$ of order $\frac{10\lambda\log n}{(\lambda-1)^2}$, such that
\begin{align*}
    \left|\bigcup_{u\in Y_v}C_{G_1}(u)\right|\ge \log^Cn.
\end{align*}
Assuming that $C>3$, we conclude that \textbf{whp} there is a component of order $\Omega(\log^2n)\ge \frac{5\lambda\log n}{(\lambda-1)^2}$ in $G_1$ at distance at most $2\log\log n$ from every vertex $v\in V(G)$.
\end{proof}

We are now ready to argue that after sprinkling with probability $p_2$, \textbf{whp} all components in $W$ merge.
\begin{lemma}\label{l: merge}
\textbf{Whp} there is a component $K$ in $G_2$, such that $W\subseteq V(K)$.
\end{lemma}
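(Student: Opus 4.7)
The plan is to use a standard double-exposure sprinkling argument in the style of Ajtai--Koml\'os--Szemer\'edi~\cite{AKS81}. First I would expose $G_1$ and condition on the high-probability structural events from Lemmas \ref{l: gap statement} and \ref{l: large components are well spread}; after conditioning, every $G_1$-component has size either at most $5\lambda\log n/(\lambda-1)^2$ or at least $\log^C n$, and every $v\in V(G)$ satisfies $dist_G(v,W)\le 2\log\log n$. Then I would expose $G_{p_2}$ independently and show that, with probability $1-o(1)$, all large $G_1$-components end up in the same $G_2$-component.

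Suppose, for contradiction, that $W$ is not contained in a single $G_2$-component. Let $K_1,\ldots,K_m$ be the distinct $G_2$-components that meet $W$, ordered so that $|K_1|\le \cdots \le |K_m|$, with $m\ge 2$ since the conclusion fails. Each $K_i$ contains a large $G_1$-component, so $|K_i|\ge \log^C n$; since the $K_i$ are disjoint, $|K_1|\le n/m\le n/2$. Setting $V_1:=V(K_1)$, the set $V_1$ is simultaneously a union of $G_1$-components (because $G_1\subseteq G_2$) and a single $G_2$-component, so no edge of $G_{p_2}$ crosses between $V_1$ and $V_1^c$. By Assumption \ref{p: global}, $e_G(V_1,V_1^c)\ge b|V_1|$, and conditional on $G_1$, the probability that a fixed such $V_1$ remains isolated from $V_1^c$ in $G_{p_2}$ is at most $(1-p_2)^{b|V_1|}\le \exp(-p_2 b|V_1|)\le \exp(-p_2 b\log^C n)$, which is super-polynomially small in $n$.

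The main task, and the principal obstacle, is to union-bound this probability over all admissible $V_1$'s. The naive bound counting connected subgraphs of $G$ of order $s$ containing a fixed vertex is $(e(d-1))^s$, which can outgrow the decay $\exp(p_2 b s)$ when $p_2 b = \delta b/(d-1)$ is small compared to $1+\log(d-1)$. To refine the count I would exploit that $V_1$ is a union of $G_1$-components: passing to the contracted graph $\Gamma := G/\mathcal{P}$, where $\mathcal{P}$ is the partition of $V(G)$ into $G_1$-components, $V_1$ corresponds to a connected subset of $\Gamma$ whose number of super-vertices is much smaller than $|V_1|$, thanks to the large $G_1$-component of size $\ge \log^C n$ that it contains. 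I would enumerate $V_1$ by first choosing this large $G_1$-component (at most $n/\log^C n$ choices) and then, via a BFS in $G_2$ starting from it, accounting for the additional $G_1$-components attached by $G_{p_2}$-bridges, each paying a factor of $p_2$ in probability. Combining this refined count with the decay $\exp(-p_2 b|V_1|)$, summing over starting vertices and $s\in [\log^C n, n/2]$, and choosing $C$ large enough relative to $d$, $\lambda-1$, and $b$, should yield the desired $o(1)$ failure probability.
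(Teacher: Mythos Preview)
Your overall sprinkling framework is right, and your conditioning on Lemmas~\ref{l: gap statement} and~\ref{l: large components are well spread} matches the paper. The trouble is in the union bound, where your route and the paper's diverge substantially.

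Your candidate bad set $V_1$ is a $G_2$-component, hence depends on $G_{p_2}$. So the line ``the probability that a fixed such $V_1$ remains isolated from $V_1^c$ in $G_{p_2}$ is at most $\exp(-p_2 b|V_1|)$'' is circular: you cannot fix $V_1$ and then expose $G_{p_2}$. You recognise this and propose to enumerate potential $V_1$'s in the contracted graph $\Gamma=G/\mathcal{P}$, paying a factor $p_2$ per sprinkled bridge. But this is where the argument is underspecified and, as written, does not close. A $G_2$-component meeting $W$ can absorb an unbounded number of \emph{small} $G_1$-components (each of size up to $5\lambda\log n/(\lambda-1)^2$); in $\Gamma$ the super-vertex corresponding to the large component has degree $\Theta(d\log^C n)$ and each small super-vertex has degree $\Theta(d\log n)$, so the expected number of children in your BFS exploration is $\Theta(p_2 d\log n)\gg 1$ --- the branching is supercritical and the ``pay $p_2$ per bridge'' factors do not beat the enumeration. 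Nothing in your sketch explains how the single factor $\exp(-p_2 b|V_1|)$ absorbs this blow-up. Moreover, you condition on Lemma~\ref{l: large components are well spread} but never actually use it.

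The paper avoids all of this by never looking at $G_2$-components. It union-bounds over $G_1$-component-respecting bipartitions $A\sqcup B$ of $W$ \emph{only}; since every component of $G_1[W]$ has size at least $\log^C n$, there are at most $n^{a/\log^C n}$ such partitions with $|A|=a$. Crucially, Lemma~\ref{l: large components are well spread} is then used: grow $A$ and $B$ by their $2\log\log n$-neighbourhoods in $G$ to obtain $A',B'$ with $A'\sqcup B'=V(G)$, apply \ref{p: global} to get $e_G(A',B')\ge b\,a$, and extract $\ge b\,a/d^{5\log\log n}$ edge-disjoint $G$-paths of length $\le 4\log\log n$ from $A$ to $B$. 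The probability none survives in $G_{p_2}$ is $(1-p_2^{4\log\log n})^{ba/d^{5\log\log n}}$, and summing against $n^{a/\log^C n}$ is $o(1)$ for $C$ large. The point is that the objects being enumerated (partitions of $W$) are measurable with respect to $G_1$ alone, and the well-spread lemma converts edge-expansion of the \emph{enlarged} sets into short $A$--$B$ paths. Your proposal would need a comparable decoupling step to be salvageable.
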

\begin{proof}
By Lemma \ref{l: gap statement}, \textbf{whp} there are no components in $G_1$ whose size is between $\frac{5\lambda\log n}{(\lambda-1)^2}$ and $\log^Cn$. Thus, \textbf{whp} every component in $G_1[W]$ is of order at least $\log^Cn$. Further, by Lemma \ref{l: large components are well spread}, \textbf{whp} every $v\in V(G)$ is at distance at most $2\log\log n$ from some $w\in W$. We continue assuming these properties hold deterministically.

It suffices to show that \textbf{whp} for every partition of $W$ into two $G_1$-component-respecting parts $A$ and $B$, with $a=|A|\le |B|$, there exists a path in $G_{p_2}$ between $A$ and $B$. To that end, let $A'$ be $A$ together with all the vertices in $V(G)\setminus B$ which are at distance at most $2\log \log n$ from some vertex in $A$. Similarly, let $B'$ be $B$ together with all the vertices in $V(G)\setminus A'$ which are at distance at most $2\log\log n$ from some vertex in $B$. Note that $V(G)=A'\sqcup B'$, and thus by \ref{p: global}, $e_G(A',B')=e_G(A',V(G)\setminus A')\ge b|A|$. We can very crudely extend these edges into at least $\frac{b\cdot a}{d^{5\log\log n}}$ edge-disjoint paths (in $G$) of length at most $4\log\log n$ between $A$ and $B$ (indeed, every edge belongs to at most $d^{4\log\log n}\cdot 4\log\log n<d^{5\log\log n}$ paths of length at most $4\log\log n$). Since every component of $W$ is of size at least $\log^Cn$ and since $W=A\sqcup B$ is a component-respecting partition, given that $|A|=a$ there are at most $\sum_{i=1}^{a/\log^Cn}\binom{n/\log^Cn}{a/\log^Cn}\le n^{a/\log^Cn}$ ways to choose $A$ (and hence the partition). Thus, by the union bound, the probability there exists such a partition without a path in $G_{p_2}$ between $A$ and $B$ is at most
\begin{align*}
    \sum_{a=\log^Cn}^{n/2}n^{a/\log^Cn}(1-p_2^{4\log\log n})^{\frac{b\cdot a}{d^{5\log\log n}}}&\le \sum_{a=\log^Cn}^{n/2}n^{a/\log^Cn}\exp\left\{-ab\left(p_2/d\right)^{5\log\log n}\right\}\\
    &\le \sum_{a=\log^Cn}^{n/2} \exp\left\{a\left(\frac{1}{\log^{C-1}n}-\frac{b}{\log^{10\log d+5\log(1/\delta)}n}\right)\right\}\\
    &\le n\cdot \exp\left\{-\log^Cn\frac{b}{2\log^{10\log d+5\log(1/\delta)}n}\right\}=o(1),
\end{align*}
where we assumed that $C$ is sufficiently large with respect to $b, d$, and $\lambda-1$.
\end{proof}

By Lemma \ref{l: gap statement}, \textbf{whp} there are no components in $G_1$, nor in $G_2$, whose order is between $5\lambda\log n/(\lambda-1)^2$ and $\log^Cn$. Further, by Lemma \ref{l: merge} \textbf{whp} all components in $G_1$ whose order was at least $5\lambda\log n/(\lambda-1)^2$ merged into a unique component. Note, however, that we still need to rule out the existence of components outside of $W$ whose order is at least $\log^Cn$ in $G_2$: any such component would be composed of components in $G_1$ whose size is at most $5\lambda\log n/(\lambda-1)^2$. We give two proofs for this, showcasing different approaches and arguments. 
\begin{lemma}\label{l: uniqueness}
\textbf{Whp}, there is no component in $G_2$ of order at least $\log^Cn$, which does not intersect $W$.
\end{lemma}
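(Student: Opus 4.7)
We prove the lemma by a union bound over vertices $v\in V(G)$, bounding for each the probability that $v$ lies in a component of $G_2$ of size at least $\log^C n$ disjoint from $W$. We expose $G_1:=G_{p_1}$ first and condition on a typical realisation. Applying the BFS argument of Lemma~\ref{l: gap statement} to $G_1$ (whose parameter $p_1\ge(\lambda-\delta)/(d-1)>1/(d-1)$ is still supercritical) gives that \textbf{whp} no $G_1$-component has size in $[M,\log^C n]$, where $M=5\lambda\log n/(\lambda-1)^2$; in particular every $v\in V$ either has $|C_{G_1}(v)|\ge\log^C n$ (so $v\in W$) or $|C_{G_1}(v)|<M$ (so $v\in V_{small}:=V\setminus W$). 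We further condition on the \textbf{whp} event that every small $G_1$-component $C$ satisfies $e_G(C,W)\ge c'|C|$ for some constant $c'=c'(\lambda,d,b)>0$; in expectation this holds because a typical neighbour of a fixed vertex lies in $W$ with probability tending to the supercritical survival probability $y(p_1)>0$, and a standard concentration argument together with the exponential tail $\mathbb{P}[|C_{G_1}(w)|=k]\le\exp\{-\Omega(k)\}$ for $k\le M$ (itself a consequence of applying Lemma~\ref{l: gap statement} to $G_1$) makes it uniform over all small $C$.

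Given such a $G_1$, we expose $G_{p_2}$ in two independent stages: first the $G_{p_2}$-edges \emph{within} $V_{small}$, which determines the super-component $B_v\subseteq V_{small}$ of $v$'s $G_1$-component in the restricted super-graph on small $G_1$-components; then the $G_{p_2}$-edges \emph{between} $V_{small}$ and $W$. The key observation is that $v$ lies in a bad component of $G_2$ if and only if $|B_v|\ge\log^C n$ and no $G_{p_2}$-edge from $B_v$ to $W$ is present: indeed, $G_{p_1}$-edges from $B_v$ to $W$ are automatically absent since $B_v$ is a union of $G_1$-components, so the $G_2$-component of $v$ is exactly $B_v$ whenever no $G_{p_2}$-edge crosses to $W$. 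By our conditioning, $e_G(B_v,W)=\sum_{C\subseteq B_v}e_G(C,W)\ge c'|B_v|$, and the two exposure stages are independent, so
\[\mathbb{P}[v\text{ bad}\mid G_1]=\mathbb{E}[\mathbb{1}_{|B_v|\ge\log^C n}(1-p_2)^{e_G(B_v,W)}]\le \exp\{-c'p_2\log^C n\}=n^{-\omega(1)},\]
and a union bound over the $n$ choices of $v$ completes the proof.

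The main obstacle is establishing the boundary-in-$W$ property $e_G(C,W)\ge c'|C|$ simultaneously for \emph{all} small $G_1$-components $C$, which is where the argument is genuinely delicate. One needs to combine the local tree-like structure of $G$ coming from Assumption~\ref{p: local} with a Chernoff-type concentration argument on $G_1$, and then take a union bound over the at most $n$ small components in order to make the expected proportion of $W$-neighbours hold uniformly; once this deterministic-in-$G_1$ estimate is in place, the remaining calculation is a one-line Chernoff bound on the independent second-stage sprinkling.
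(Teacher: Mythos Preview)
Your two--stage sprinkling decomposition is correct and the reduction to ``$|B_v|\ge\log^C n$ and no $p_2$--edge from $B_v$ to $W$'' is valid. The gap is in the deterministic input you feed into that reduction: the assertion that \textbf{whp} every small $G_1$--component $C$ satisfies $e_G(C,W)\ge c'|C|$ is false, and your sketched justification cannot be repaired.

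Concretely, consider an isolated vertex $v$ of $G_1$ (a component of size~$1$). Conditionally on $v$ being isolated, each of its $d$ $G$--neighbours lies in $W$ with probability bounded away from~$1$; in the locally tree--like region these events are essentially independent, so with probability at least roughly $(1-y(p_1))^d>0$ none of them lies in $W$, i.e.\ $e_G(C,W)=0$. Since a positive fraction of vertices are isolated in $G_1$, there are $\Theta(n)$ small components with $e_G(C,W)=0$. More generally, for a component of size $k$ the boundary has only $\Theta(k)$ vertices, so any Chernoff--type bound on $e_G(C,W)$ gives failure probability $\exp(-\Theta(k))$; a union bound over the order--$n$ many small components fails unless $k=\Omega(\log n)$, and the exponential tail $\mathbb{P}[|C_{G_1}(w)|=k]\le e^{-\Omega(k)}$ you invoke does not change this (it only trims the count by another $e^{-\Theta(k)}$ factor, still leaving $n\,e^{-\Theta(k)}$ terms). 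Hence you cannot conclude $e_G(B_v,W)\ge c'|B_v|$ by summing the per--component bounds, and without that lower bound the final $(1-p_2)^{e_G(B_v,W)}\le e^{-c'p_2\log^C n}$ step collapses.

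The paper's proof sidesteps any analysis of how $W$ borders individual small components. It argues a dichotomy on $G_1$: either some connected union $M$ of small $G_1$--components with $|M|\in[\log^2 n,2\log^2 n]$ can be formed using at most $t\asymp\log^2 n/\log((d-1)/\delta)$ extra $G$--edges, in which case sprinkling realises $M$ as an actual $G_2$--component with probability at least $p_2^{\,t}(1-p_2)^{2d\log^2 n}$ --- and comparing this to the upper bound from Lemma~\ref{l: gap statement} shows this configuration of $G_1$ has probability $o(1)$; or every such $M$ needs more than $t$ extra edges, in which case a BFS growing the putative bad $G_2$--component through small $G_1$--components must collect more than $t$ positive $p_2$--answers in at most $2d\log^2 n$ queries, which is $o(1/n)$ by a direct binomial tail bound. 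This avoids precisely the obstacle your approach runs into.
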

\begin{proof}
We begin by exposing $G_1$. Let us show that after sprinkling with probability $p_2$, there is no component in $G_2$ of order at least $\log^Cn$ which does not intersect $W$. 

%Let $M$ be a connected (in $G$) set of size at least $\log^Cn$ satisfying $M\cap W=\varnothing$. We can then find a connected subset $M'\subseteq M$, whose size lies in the interval $[\log^2 n, 2\log^2 n]$, so that $E_{G_1}[M', V(G)\setminus M']=\varnothing$ (indeed, every component of $G_1[V(G)\setminus W]$ is of size at most $5\lambda\log n/(\lambda-1)^2$, and we can form a connected subset $M'$ by joining these components one by one until the first time we reach at least $\log^2n$ vertices, and have that $|M'|\le \log^2n+5\lambda\log n/(\lambda-1)^2<2\log^2n$).

Suppose first that $G_1$ is such that one can create a connected set $M$ satisfying $|M|\in [\log^2n, 2\log^2n]$, $M\cap W=\varnothing$ and $E_{G_1}[M, V(G)\setminus M]=\varnothing$ by adding at most $t=\frac{(\lambda-1)^2\log^2n}{5\lambda\log((d-1)/\delta)}$ edges to $G_1$. Then, the probability that $G_2$ contains a connected component $K$ whose size is in the interval $[\log^2n, 2\log^2n]$ is at least
\begin{align*}
    p_2^t\left(1-p_2\right)^{d\cdot 2\log^2n}&\ge \exp\left\{-t\log\left(\frac{d-1}{\delta}\right)-\delta\cdot 4\log^2n\right\}\\
    &\ge \exp\left\{-\frac{\left((\lambda-1)^2+20\delta\lambda\right)\log^2n}{5\lambda}\right\}.
\end{align*}
On the other hand, by Lemma \ref{l: gap statement}, the probability there is a connected component in $G_2$ whose order lies in the interval $[\log^2n, 2\log^2n]$ is at most
\begin{align*}
    n\cdot 2\log^2n \cdot \exp\left\{-\frac{(\lambda-1)^2\log^2n}{4\lambda}\right\},
\end{align*}
which is a contradiction since $\frac{(\lambda-1)^2\log^2n}{4\lambda}-2>\frac{\left((\lambda-1)^2+20\delta\lambda\right)\log^2n}{5\lambda}$, where we assumed that $\delta$ is sufficiently small with respect to $\lambda-1$.

We can thus assume that in $G_1$, in order to create a connected set $M$ satisfying $|M|\in [\log^2n, 2\log^2n]$, $M\cap W=\varnothing$ and $E_{G_1}[M, V(G)\setminus M]=\varnothing$, we must add at least $t$ edges. Utilising that, let us now show that \textbf{whp} when exploring the connected component in $G_2$ of any component $K$ of $G_1[V(G)\setminus W]$, we could not uncover a component of order at least $\log^Cn$ which does not intersect $W$. 

To that end, consider the following variant of the BFS algorithm, which receives as input the graph $G$ with an order $\sigma$ on its vertices, the subgraph $G_1\subseteq G$, a component $K$ in $G_1[V(G)\setminus W]$, and a sequence $(X_i)_{i=1}^{nd/2}$ of independent Bernoulli$(p_2)$ random variables. As in Section \ref{s: BFS}, the algorithm maintains three sets: $S$, the set of vertices whose exploration has been completed; $Q$, the set of vertices currently being explored, kept in a queue (first-in-first-out discipline); and $T$, the set of vertices which have yet been processed. We initialise $S=\varnothing$, $Q=V(K)$, and $T=V\setminus (K\cup W)$. The process stops once $Q$ is empty.

At round $\tau$, we consider the first vertex $v$ in $Q$, and query the first edge from $v$ to $T$ according to the order $\sigma$, denote its endpoint in $T$ by $u$. If $X_\tau=1$, we retain this edge and move the set $C_{G_1}(u)$ from $T$ to $Q$. If $X_t=0$, we discard the edge and continue. If there are no neighbours of $v$ in $T$ left to query, we move $v$ from $Q$ to $S$ and continue. 

Note that at the end of this process, the component of $G_2[V(G)\setminus W]$ intersecting with $K$ has the same distribution as $G_2[S]$. If the component of $G_2[V(G)\setminus W]$ intersecting with $K$ is of order at least $\log^Cn$, then must have been a moment $\tau$ where $|S\cup Q|\in [\log^2n, 2\log^2n]$. Indeed, every component of $G_1[V(G)\setminus W]$ is of size at most $5\lambda\log n/(\lambda-1)^2$, and at the first time when $|S\cup Q|\ge \log^2n$, we have that $|S\cup Q|\le \log^2n+5\lambda\log n/(\lambda-1)^2<2\log^2n$. Thus, $\tau \le 2d\log^2n$. On the other hand, by our assumption, by moment $\tau$ we received at least $t$ positive answers. By the union bound over the at most $n$ possible choices of $K$, and by Lemma \ref{l:  chernoff}, the probability of this event is at most
\begin{align*}
    n\cdot \mathbb{P}\left[Bin\left(2d\log^2n, \frac{\delta}{d-1}\right)\ge \frac{(\lambda-1)^2\log^2n}{5\lambda\log((d-1)/\delta)}\right]\le n\cdot 2\exp\left\{-\Omega(\log^2n)\right\}=o(1),
\end{align*}
where we assumed that $\delta$ is small enough with respect to $\lambda-1$ and $d$.
\end{proof}

Let us also give a second proof, which is shorter, and somewhat similar to the BFS-type arguments given previously in this paper.
\begin{proof}[Alternative proof of Lemma \ref{l: uniqueness}]
For $v\in V(G)$, let $\mathcal{A}_v$ be the event that $C_{G_2}(v)\cap W=\varnothing$ and \break ${|C_{G_2}(v)|\ge \log^2n}$. By Lemma \ref{l: gap statement}, we have that the probability of an event violating the statement of the Lemma is at most $\mathbb{P}\left[\bigcup_{v\in V(G)}\mathcal{A}_v\right]+o(1)$.

Let $(X_i)_{i=1}^{|E|}$ be a sequence of i.i.d Bernoulli($p_1$) random variables, and let $(Y_i)_{i=1}^{|E|}$ be a sequence of i.i.d. Bernoulli $(p_2)$ random variables. We will run a BFS-type algorithm, very similar to the one in Section \ref{s: BFS}, however, we will utilise both sequences of random variables.

Fix $v\in V(G)$. We initialise $Q=\{v\}$. Now, as long as $|S\cup Q|\le \log^2n$, when we query the $t$-th edge, we retain it if $X_t=1$ or $Y_t=1$, and discard it only if $X_t=Y_t=0$. Once $|S\cup Q|=\log^2n$ (and at any subsequent moment), when we query the $t$-th edge, we retain it if $X_t=1$, and discard it if $X_t=0$. Note that in this manner, up until $|S\cup Q|=\log^2n$, the exploration of the component of $v$ is in $G_2$, and afterwards we continue the exploration in $G_1$.  Also, each query obtains a positive answer independently and with probability at least $p_1$. 

Note that if $\mathcal{A}_v$ occurs, then we must have reached a moment where $|S\cup Q|=\log^2n$. Suppose that $C_{G_2}(v)\cap W=\varnothing$. Then, the above process must have ended before $|S\cup Q|=\log^4n$. Indeed, otherwise, there are vertices in $C_{G_2}(v)$ belonging to components of size at least $\log^4n/\log^2n=\log^2n$ in $G_1$, and thus (by definition) intersect with $W$. Hence, the probability that $\mathcal{A}_v$ occurs is at most the probability that the above process stopped at some moment $t$ where $\log^2n\le |S|\le \log^4n$. Let $k\coloneqq |S|$. At that moment, we had exactly $k-1$ positive answers in the algorithm's run. Moreover, assuming that $C\ge 4$, by Property \ref{p: local} we had additional $e(S,T)=e(S,S^c)\ge (d-2-2c)k$ queries, all answered in the negative. Hence, we had at least $(d-1-2c)k-1$ queries and received only $k-1$ positive answers. Noting that $\left((d-1-2c)k-1\right)\cdot p_1>k-1$, we have by Lemma \ref{l:  chernoff} that
\begin{align*}
    \mathbb{P}\left[\mathcal{A}_v\right]&\le \mathbb{P}\left[Bin\left((d-1-2c)k-1,p_1\right)\le k-1\right]\\
    &\le \exp\left\{-\Theta(k)\right\}=o(1/n),
\end{align*}
where we used that $k\ge \log^2n$. Thus, by the union bound, $\mathbb{P}\left[\bigcup_{v\in V(G)}\mathcal{A}_v\right]=o(1)$, completing the proof.
\end{proof}

We are now ready to prove Theorem \ref{th: main}.
\begin{proof}[Proof of Theorem \ref{th: main}]
By Lemma \ref{l: merge}, \textbf{whp} there is a unique component $L_1$ in $G_2$, such that $W\subseteq V(L_1)$. By Lemma \ref{l: gap statement}, \textbf{whp} any component in $G_2$ besides $L_1$ is either of size at most $5\lambda\log n/(\lambda-1)^2$, or of size at least $\log^Cn$. By Lemma \ref{l: uniqueness}, \textbf{whp} any component in $G_2$ whose size is at least $\log^Cn$ intersects with $W$, and is thus part of $L_1$. Thus, \textbf{whp} all components of $G_2$ besides $L_1$ are of order at most $5\lambda\log n/(\lambda-1)^2$. 

Let us now show that \textbf{whp} $\left|1-\frac{|L_1|}{yn}\right|\le \alpha$, where $y=y(p)$ is defined according to \eqref{eq: extinction}. 

The probability a vertex belongs to a component of order at least $\log^Cn$ in $G_p$ is stochastically dominated by the probability that the root of an infinite $d$-regular tree belongs to an infinite cluster after $p$-bond-percolation. Thus, by standard results (see, for example, \cite{D19}) $|L_1|\le (1+o(1))yn$.

Let $Z_1$ be the random variable counting the number of vertices in components of order at least $\frac{1}{16c}$ in $G_2$. By Lemma \ref{l: far from cycles}, there exists a set $X\subseteq V(G)$, $|X|\ge \left(1-\frac{1}{(d-1)^{\frac{1}{16c}}}\right)n$ such that for every $v\in X$, there are no cycles in $B\left(v,\frac{1}{16c}\right)$. Hence, by standard results, we have that for every $v\in X$, $\mathbb{P}\left[|C_{G_2}(v)|\ge \frac{1}{16c}\right]\ge (1-o_c(1))y$, where $o_c(1)$ tends to zero as $c$ tends to zero. Thus, $\mathbb{E}[|Z_1|]\ge (1+o_c(1))y\cdot \left(1-\frac{1}{(d-1)^{\frac{1}{16c}}}\right)n=(1-o_c(1))yn$. To show that $|Z_1|$ is well concentrated around its mean, consider the standard edge-exposure martingale. Every edge can change the value of $|Z_1|$ by at most $\frac{1}{8c}$. Hence, by Lemma \ref{l: azuma},
\begin{align*}
    \mathbb{P}\left[\left||Z_1|-\mathbb{E}[|Z_1|]\right|\ge n^{2/3}\right]\le 2\exp\left\{-\frac{n^{4/3}}{2\cdot \frac{nd}{2}}\cdot \frac{1}{64c^2}\right\}=o(1).
\end{align*}
Therefore, \textbf{whp}, $|Z_1|\ge (1-o_c(1))yn$. 

Let $Z_2$ be the random variable counting the number of vertices in components of $G_2$ whose order lies in the interval $\left[\frac{1}{16c},\log^Cn\right]$. By Lemma \ref{l: gap statement}, the probability $v\in V(G)$ belongs to such a component is at most $\sum_{k=1/(16c)}^{\log^Cn}\exp\left\{-\frac{(\lambda-1)^2k}{4}\right\}\le \exp\left\{-\frac{(\lambda-1)^2}{70\lambda\cdot c}\right\}$. Thus $\mathbb{E}[|Z_2|]\le o_c(1)n$, where we assumed that $c$ is sufficiently small with respect to $\lambda-1$. Once again, let us consider the standard edge-exposure martingale. Every edge can change the value of $|Z_2|$ by at most $2\log^Cn$. Hence, by Lemma \ref{l: azuma},
\begin{align*}
    \mathbb{P}\left[\left||Z_2|-\mathbb{E}[|Z_2|]\right|\ge n^{2/3}\right]\le 2\exp\left\{-\frac{n^{4/3}}{2\cdot \frac{nd}{2}\cdot 4\log^{2C}n}\right\}=o(1).
\end{align*}

Therefore, we obtain that the number of vertices in components of order at least $\log^Cn$ in $G_2$ is \textbf{whp} $(1-o_c(1))yn-(1+o(1))o_c(1)n$. Thus, given $\alpha$ we can choose $c$ small enough such that \textbf{whp} there are at least $(y-\alpha)n$ vertices in components of order at least $\log^Cn$ in $G_2$. By Lemma \ref{l: uniqueness} every component of order $\log^Cn$ in $G_2$ intersects with $W$. Thus, the number of vertices in components that intersect with $W$ is at least $(y-\alpha)n$. By Lemma \ref{l: merge}, all the vertices in $W$ merge into a unique component $L_1$, and hence \textbf{whp} $|L_1|\ge (y-\alpha)n$. Altogether, we have that $\left|1-\frac{|L_1|}{yn}\right|\le \alpha$.
\end{proof}

\section{Discussion}\label{s: discuss}
We showed that for a fixed $d\ge 3$, any $d$-regular $n$-vertex graph $G$ which satisfies a fairly mild global expansion assumption \ref{p: global}, and does not have dense sets of polylogarithmic order (Property \ref{p: local}), exhibits a phase transition around $p=\frac{1}{d-1}$ similar to that of the binomial random graph $G(n,p)$ around $p=\frac{1}{n}$ (and alike to that of percolation on a random $d$-regular graph) --- that is, the typical emergence of a unique giant component $L_1$ of order linear in $n$, while \textbf{whp} all other components are of order at most logarithmic in $n$.

With slight adaptation, it follows from the above proof that for a fixed $d\ge 3$, given an $n$-vertex graph $G$ with \textit{minimum degree} $d$, when $p=\frac{\lambda}{d-1}$ with $\lambda>1$, \textbf{whp} $G_p$ contains a giant component of order linear in $n$, and all the other components are of order logarithmic in $n$. Indeed, if we rely on the alternative proof given for Lemma \ref{l: uniqueness}, the only place where we use an upper bound on the maximum degree of $G$ is in Lemma \ref{l: merge} (when estimating the number of edge-disjoint paths between $A$ and $B$). To overcome that, one can take the more delicate approach as given in \cite{DK25B}. First, we show that \textbf{whp} for every $v\in V(G)$, a constant fraction of the vertices in $B(v,2\log\log n)$ are in $W$. Using that, we can show that given a $G_1$-component-respecting partition $A\sqcup B$ of $W$, \textbf{whp} we can construct an $\Omega(d)$-regular tree of depth at most $2\log\log n$ from every $v\notin W$, whose leaves are either in $A$ or in $B$. Then, when sprinkling with probability $p_2$ on these trees, we can show that \textbf{whp} for any such partition, there is a path between $A$ and $B$.

We note that while one might anticipate the same could hold if we consider $p\ge (1+\epsilon)p_c(G)$, where $p_c(G)$ is the critical probability of the graph $G$ and $\epsilon>0$ is a small constant, this is quite false. Indeed, consider the graph $G$ constructed as follows (where we note that the construction is similar to the construction given in \cite{KLS20}). Let $0<a<1$ be a constant, and let $H$ be a $100d$-regular graph on $n-\log n \cdot n^{a}$ vertices, which is a fairly good expander and is locally sparse (for example, a random $100d$-regular graph on $n-\log n\cdot n^{a}$ typically satisfies this). Let $T$ be a $C'd$-regular tree (for some sufficiently large $C'>0$) of depth $a\log_{C'd-1}n/2$, and from each leaf of $T$, let us grow a $d$-regular tree on about $n^{a/2}$ vertices. Denote by $T'$ the obtained graph, and note that it has $\approx n^{a/2}\cdot (C'd-1)^{a\log_{C'd-1}n/2}=n^a$ vertices. We now take $\log n$ copies of $T'$, and identify the leaves of each copy with distinct vertices in $H$. Let $G$ be the obtained graph. Note that $G$ is also a good expander and locally sparse. We then have that $p_c(G)=\frac{1+o(1)}{100(d-1)}$, and when $p=(1+\epsilon)p_c(G)$, for large enough $C'$ the second-largest component of $G_p$ would be of order $\Omega(n^{a/2})$.

As mentioned in the introduction, Krivelevich, Lubetzky, and Sudakov \cite{KLS20} showed the existence of $d$-regular graphs on $n$ vertices, satisfying Assumption \ref{p: global} and with large (yet constant) girth, such that the second largest component in the supercritical regime is typically of order polynomial in $n$. Thus, while it is clear that some additional requirement (to \ref{p: global}) is necessary in order to ensure that $G$ exhibits phase transition similar to that of $G(n,p)$, it remains open whether Assumption \ref{p: local} is (essentially) tight, or could it be relaxed --- can we ensure that $G$ exhibits phase transition similar to that of $G(n,p)$, replacing our local density assumption with one that holds for sets up to size, say, $O(\log n)$? Furthermore, as mentioned in the discussion after Theorem \ref{th: main}, it suffices for $G$ to satisfy \ref{p: global} and have girth of order $\Omega(\log\log n)$ in order to ensure such a phase transition. Can this assumption on the girth of $G$ be further relaxed? What assumption on the girth of $G$ would suffice?

\bibliographystyle{abbrv}
\bibliography{perc} 
\end{document}